\newtheorem{thm}{Theorem}[section]
\newtheorem{cor}[thm]{Corollary}
\newtheorem{lem}[thm]{Lemma}
\theoremstyle{definition}
\newtheorem{definition}[thm]{Definition}
\newtheorem{remark}[thm]{Remark}
\newtheorem{example}[thm]{Example}
\numberwithin{equation}{section}
\begin{document}


\baselineskip=17pt



\title{Using boundaries to find smooth norms}
\date{\today}
\author{Victor Bible}
\address{Victor Bible, UCD School of Mathematical Sciences}
\email{victor.bible@ucdconnect.ie}


\begin{abstract}
The aim of this paper is to present a tool used to show that certain Banach spaces can be endowed with $C^k$ smooth equivalent norms. The hypothesis uses particular countable decompositions of certain subsets of $B_{X^*}$, namely boundaries. Of interest is that the main result unifies two quite well known results. In the final section, some new corollaries are given.
\end{abstract}

\keywords{boundaries, $C^k$ smooth norms, countable covers, injective tensor product, Orlicz, renormings}
\thanks{The author is supported financially by Science Foundation Ireland under Grant Number `SFI 11/RFP.1/MTH/3112'}
\maketitle

\begin{section}{Introduction}
We say the norm of a Banach space $(X,||\cdot||)$ is $C^k$ smooth if its $k$th Fr\'{e}chet derivative exists and is continuous at every point of $X \setminus \{ 0 \}$. The norm is $C^{\infty}$ smooth if this holds for all $k \in \mathbb{N}$. This paper is concerned with the problem of establishing sufficient conditions for when a Banach space has a $C^k$ smooth renorming, for $k \in \mathbb{N} \cup \{ \infty \}$.
\begin{definition}A subset $B \subseteq B_{X^*}$ is a called a \emph{boundary} if for each $x$ in the unit sphere $S_X$, there exists $f \in B$ such that $f(x) = 1$.
\end{definition}

\begin{example}The following will be boundaries for any Banach space $X$.
\begin{enumerate} \label{eg3}
  \item The dual unit sphere $S_{X^*}$. This is a consequence of the Hahn-Banach Theorem.
  \item \label{eg3:2} The set of extreme points of the dual unit ball, Ext$(B_{X^*})$. This follows from the proof of the Krein-Milman Theorem (\cite[Fact 3.45]{fabetal}).
\end{enumerate}
\end{example}

Given $\varepsilon > 0$ and norms $|| \cdot ||$ and  $||| \cdot |||$ on a Banach space $X$, say $||| \cdot |||$ $\varepsilon$-\emph{approximates} $|| \cdot ||$ if, for all $x \in X$, $$(1-\varepsilon) || x || \leq |||x||| \leq (1 + \varepsilon) ||x||.$$ The notion of a boundary plays an important role in this area of study. Frequently, the existence of a boundary with certain properties gives rise to the desired renormings, as seen in the following result of H\'{a}jek, which is part of a more general theorem.

\begin{thm}[{\cite[Theorem 1]{hajek}}] \label{h}
If $(X,||\cdot||)$ admits a boundary contained in a $||\cdot||$-$\sigma$-compact subset of $B_{X^*}$, then $X$ admits an equivalent $C^{\infty}$ smooth norm that $\varepsilon$-approximates $||\cdot||$.
\end{thm}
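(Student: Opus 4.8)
The plan is to realise the desired norm as the Minkowski gauge of a sublevel set of a single $C^{\infty}$ convex function $F\colon X\to[0,\infty)$, built by summing many everywhere-smooth, finite-rank pieces extracted from the compact cover. First I would record the elementary consequence of the hypothesis: since $B$ is a boundary, $\|x\|=\sup_{f\in B}f(x)=\sup_{f\in B}|f(x)|$ for every $x$, the last equality because $\|x\|=\|-x\|$. Writing $B\subseteq\bigcup_{n}K_{n}$ with each $K_{n}$ norm-compact, I may assume the $K_{n}$ are increasing and symmetric ($K_{n}=-K_{n}$), so that the numbers $\sup_{f\in K_{n}}|f(x)|$ increase to $\|x\|$ as $n\to\infty$.

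Next I would manufacture the smooth pieces, and this is where norm-compactness is spent. For each $n$ choose a finite $\delta_{n}$-net $f^{n}_{1},\dots,f^{n}_{m_{n}}\in K_{n}$ of $K_{n}$ and let $T_{n}\colon X\to\mathbb{R}^{m_{n}}$ be the bounded finite-rank operator $T_{n}x=(f^{n}_{i}(x))_{i}$. For a large even integer $p_{n}$ set $\phi_{n}(x)=\sum_{i=1}^{m_{n}}(f^{n}_{i}(x))^{p_{n}}$, a nonnegative polynomial that is even and, being a sum of the convex maps $s\mapsto s^{p_{n}}$ composed with the linear forms $f^{n}_{i}$, convex; crucially it is $C^{\infty}$ on all of $X$, unlike a naive $\sup$. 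The role of the net is that $(\phi_{n}(x))^{1/p_{n}}=\|T_{n}x\|_{p_{n}}$ lies between $\sup_{i}|f^{n}_{i}(x)|$ and $m_{n}^{1/p_{n}}\sup_{i}|f^{n}_{i}(x)|$, so that for $p_{n}$ large and $\delta_{n}$ small it reproduces $\sup_{f\in K_{n}}|f(x)|$ to within a factor as close to $1$ as desired plus the net error $\delta_{n}\|x\|$.

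Then I would assemble $F(x)=\sum_{n}a_{n}\phi_{n}(x)$ and choose the weights $a_{n}\downarrow 0$ fast. Because each $\phi_{n}$ is a polynomial of known degree $p_{n}$ with explicitly bounded Fr\'echet derivatives on bounded sets, a sufficiently rapidly decaying choice of $a_{n}$ forces the series and all of its derivatives to converge uniformly on bounded subsets of $X$; hence $F\in C^{\infty}(X)$, and $F$ is convex and even. Tuning $a_{n}$, $p_{n}$ and $\delta_{n}$ jointly, and using that $\sup_{f\in K_{n}}|f(x)|\uparrow\|x\|$, I would arrange two-sided bounds of the form $(1-\varepsilon)\|x\|\le|||x|||\le(1+\varepsilon)\|x\|$ for the gauge $|||x|||=\inf\{t>0:F(x/t)\le 1\}$ of the symmetric convex body $\{F\le 1\}$. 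Since $F$ is $C^{\infty}$, convex, and strictly increasing along each ray from the origin (every ray eventually meets a piece with $T_{n}x\neq0$), its gradient does not vanish on the level set $\{F=1\}$, so the Implicit Function Theorem upgrades $|||\cdot|||$ to a genuine $C^{\infty}$ norm, while the sandwich yields the $\varepsilon$-approximation.

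The hardest part will be the simultaneous calibration in the assembly step. Three sources of error, namely the net radius $\delta_{n}$, the gap between $\|\cdot\|_{p_{n}}$ and $\|\cdot\|_{\infty}$ on $\mathbb{R}^{m_{n}}$, and the Orlicz-type distortion created by summing homogeneous pieces of different degrees $p_{n}$, all interact, and they must be balanced against the derivative growth of the high-degree polynomials so that $a_{n}$ can be chosen to secure both the $C^{\infty}$ convergence and the $(1\pm\varepsilon)$ bounds at once. Confirming that the gauge of $\{F\le 1\}$ inherits full $C^{\infty}$ smoothness, rather than merely finite-order smoothness, is precisely where the finite-dimensionality of each $T_{n}$ and the everywhere-smoothness of the polynomial blocks are indispensable, since they remove the non-differentiability that a direct supremum over the boundary would introduce.
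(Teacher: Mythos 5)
Your strategy is not the one the paper uses --- the paper deduces Theorem \ref{h} in one line from Corollary \ref{cor1}, since a norm-compact subset of $X^*$ is automatically $w^*$-LRC and $w^*$-compact --- but the more serious issue is that your direct construction has a genuine gap at the assembly step, and it is not one that more careful calibration can close. The two demands you place on the weights $a_n$ are incompatible. To force $\{F\le 1\}$ inside a ball of radius $1+\varepsilon$ you need $F(y)>1$ whenever $\|y\|>1+\varepsilon$; for a point $y$ whose norming functionals lie only in $K_n$ with $n$ large, and on which the earlier blocks vanish, the only available lower estimate is $F(y)\ge\sum_{k\ge n}a_k\bigl((1-\delta_k)(1+\varepsilon)\bigr)^{p_k}$, so every tail of the series $\sum_k a_k\bigl((1-\delta_k)(1+\varepsilon)\bigr)^{p_k}$ must be at least $1$, and hence the series diverges. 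But then at a point $x$ with $\|x\|=1+2\varepsilon$ normed by a functional in some early $K_{k_0}$ you get $F(x)\ge\sum_{k\ge k_0}a_k\bigl((1-\delta_k)(1+2\varepsilon)\bigr)^{p_k}=+\infty$, because each term exceeds the corresponding term of the divergent series by the factor $\bigl(\tfrac{1+2\varepsilon}{1+\varepsilon}\bigr)^{p_k}$, which tends to infinity since $p_k\to\infty$ (forced on you by the need to kill the $m_k^{1/p_k}$ distortion). Concretely, take $X=c_0$ with boundary $\{\pm e_n^*\}$ and $K_n=\{0,\pm e_1^*,\dots,\pm e_n^*\}$ (so the nets are the $K_n$ themselves): the requirement $F(te_N)=\sum_{k\ge N}2a_kt^{p_k}>1$ for all $t>1+\varepsilon$ and all $N$ forces $\sum_k 2a_k(1+\varepsilon)^{p_k}=\infty$, whence $F\bigl((1+2\varepsilon)e_1\bigr)=\sum_k 2a_k(1+2\varepsilon)^{p_k}=\infty$. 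So the series cannot converge, let alone in all derivatives, on bounded sets, and your claimed sandwich plus $F\in C^\infty(X)$ is unattainable; restricting to a neighbourhood of the level set does not help, because the level points $t_Ne_N$ accumulate at the boundary of the domain of convergence.

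This obstruction is precisely why \cite{hajek}, \cite{hajekhaydon} and the present paper do not use pure powers $s\mapsto s^{p_n}$ but $C^\infty$ Orlicz functions $\phi_f$ that vanish identically on $[0,1/\psi(f)]$ and exceed $1$ at $1/\theta(f)$: the flatness near $0$ makes the defining sum locally \emph{finite} near the unit sphere of the new norm (local dependence on finitely many coordinates), so there is no infinite series to calibrate, and the implicit function theorem (Lemma \ref{lem2}) then gives $C^\infty$ smoothness. Your finite $\delta_n$-nets of the compacta are the right first move --- they reappear as the sets $\varGamma_n$ in the proof of Theorem \ref{main} --- but the passage from the nets to a smooth function must go through flat Orlicz functions and a local-finiteness argument (this is where the separation of the nets and the thresholds $\psi(f)>1$ earn their keep), not through a globally convergent sum of polynomials. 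Within this paper, the efficient proof of Theorem \ref{h} is the stated one: each $K_n$ is norm-compact, hence $w^*$-LRC and $w^*$-compact, and Corollary \ref{cor1} (that is, Theorem \ref{main} with $Y=\mathbb{R}$) applies.
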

H\'{a}jek and Haydon provided another sufficient condition for when this property holds, namely when $X = C(K)$ and $K$ is a compact Hausdorff $\sigma$-discrete space. We call a topological space $K$ \emph{$\sigma$-discrete} if $K = \bigcup_{n=0}^{\infty} D_n$, where each $D_n$ is relatively discrete: given $x \in D_n$, there exists $U_x$ open in $K$ such that $U_x \cap D_n = \{ x \}$.

\begin{thm}[{\cite[Theorem 5.1]{hajekhaydon}}] \label{hh}
Let $K$ be a $\sigma$-discrete compact space. Then, given $\varepsilon > 0$, $C(K)$ admits an equivalent $C^{\infty}$ smooth norm that $\varepsilon$-approximates $||\cdot||_{\infty}$.
\end{thm}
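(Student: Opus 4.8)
The natural first strategy is to try to deduce Theorem~\ref{hh} from Theorem~\ref{h} by producing a suitable boundary of $C(K)$. Writing $C(K)^{*}=M(K)$ for the space of regular Borel measures under the total variation norm, the canonical candidate is the set of signed evaluation functionals $B=\{\pm\delta_{t}:t\in K\}$. These are precisely the extreme points of $B_{M(K)}$, so $B$ is a boundary by Example~\ref{eg3}; concretely, if $x\in S_{C(K)}$ then compactness of $K$ and continuity of $x$ force $|x(t_{0})|=1$ for some $t_{0}$, and $\delta_{t_{0}}$ or $-\delta_{t_{0}}$ norms $x$. One would then hope that $B$ lies inside a $||\cdot||$-$\sigma$-compact subset of $B_{M(K)}$.

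The first thing to check, and the point where this reduction collapses, is that such containment is impossible once $K$ is uncountable. For $s\neq t$ the measure $\delta_{s}-\delta_{t}$ has total variation $2$, so $B$ is $2$-separated in norm; hence any norm-compact set meets $B$ in a finite set, and any $\sigma$-compact set meets $B$ in at most a countable set. Worse, at a peak point $t$ (for instance an isolated point, of which examples such as the one-point compactification of an uncountable discrete set possess uncountably many) the \emph{only} element of $B_{M(K)}$ norming the associated peaking function is $\delta_{t}$, so every boundary is forced to contain such Diracs. Thus no boundary of $C(K)$ can be norm-$\sigma$-compact, Theorem~\ref{h} does not apply, and a genuinely different argument is required; this is exactly the gap that a finer, merely \emph{countable} decomposition $B=\bigcup_{n}\{\pm\delta_{t}:t\in D_{n}\}$ indexed by the discrete levels is designed to bridge.

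Accordingly, I would construct the norm directly, taking as template the classical $C^{\infty}$ renorming of $c_{0}(\Gamma)$. After disjointifying the decomposition (each $D_{n}'=D_{n}\setminus\bigcup_{m<n}D_{m}$ is still relatively discrete), one fixes a smooth, even, convex $\theta$ with $\theta\equiv 0$ on $[-\tfrac12,\tfrac12]$ increasing to $1$, and defines the new norm as the Minkowski gauge $\inf\{r>0:\Psi(x/r)\leq 1\}$ of a convex functional $\Psi$ assembled level by level over the $D_{n}'$, with weights decaying in $n$ and with the separating neighbourhoods $U_{t}$ used to localise each point's contribution. Smoothness of the gauge would then follow from the implicit function theorem applied to $r\mapsto\Psi(x/r)$, exactly as in the $c_{0}(\Gamma)$ case, and the weights would be tuned so that the gauge $\varepsilon$-approximates $||\cdot||_{\infty}$.

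The main obstacle is that a literal transcription of the $c_{0}(\Gamma)$ sum diverges: a continuous function on $K$ need not decay, so a raw sum $\sum_{t}\theta(x(t)/r)$ is already infinite for $x\equiv 1$. The crux is therefore to replace the values $x(t)$ by a localised quantity that \emph{vanishes} on functions flat near $t$ — morally the excess of $x(t)$ over the values of $x$ on $U_{t}$ at lower levels — so that the defining sums become locally finite through the interplay of relative discreteness and continuity: if $x$ stayed large on infinitely many points of a single level, those points would accumulate, continuity would propagate the large value to the limit, and that limit, sitting at another level, would already be controlled. Verifying that this localised functional is genuinely convex, that its gauge is $C^{\infty}$ off the origin, and that it still $\varepsilon$-approximates $||\cdot||_{\infty}$ — in particular at functions whose supremum is attained on a large set, where $||\cdot||_{\infty}$ is most badly non-smooth — is the delicate technical heart of the argument.
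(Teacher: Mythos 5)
Your diagnosis of why Theorem~\ref{h} cannot be applied directly is correct and worth making: the Dirac measures are $2$-separated in total variation, so for uncountable $K$ no boundary of $C(K)$ can be covered by countably many norm-compact sets. From that point on, however, your argument is a plan rather than a proof. You propose to rebuild a localised Orlicz-type gauge from scratch, and you explicitly defer the three things that actually need proving --- convexity of the localised functional, smoothness of its Minkowski gauge, and the $\varepsilon$-approximation --- as ``the delicate technical heart of the argument''. That heart is exactly what is missing, so as written the proposal does not establish the theorem.

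The idea you are missing is that this paper has already packaged that machinery in Theorem~\ref{main} and Corollary~\ref{cor1}: the hypothesis ``norm-$\sigma$-compact'' of Theorem~\ref{h} is weakened to ``$\sigma$-$w^*$-LRC and $\sigma$-$w^*$-compact'', and the $\sigma$-discrete structure of $K$ hands you such a boundary for free. Writing $K=\bigcup_{n} D_n$ with each $D_n$ relatively discrete, the set $E_n=\{\pm\delta_t : t\in D_n\}$ is $w^*$-relatively discrete (the open set $U_t$ witnessing discreteness of $D_n$ at $t$ yields, via a suitable test function, a $w^*$-open set isolating $\pm\delta_t$ inside $E_n$), hence $w^*$-LRC by Example~\ref{eg1}; and $E=\bigcup_{n} E_n=\{\pm\delta_t : t\in K\}$ is $w^*$-compact, being the continuous image of two copies of $K$ under $t\mapsto\pm\delta_t$, and is a boundary because every $f\in C(K)$ attains $||f||_{\infty}$ at some point of $K$ by compactness. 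Corollary~\ref{cor1} then gives the $C^{\infty}$ renorming at once. Your instinct that relative discreteness and continuity must conspire to keep the Orlicz sums locally finite is precisely the mechanism inside the proof of Theorem~\ref{main} (the $w^*$-LRC decomposition, the sets $L_n$ and the nets $\varGamma_n$); reproving it ad hoc for $C(K)$ is unnecessary here and, in your write-up, not actually carried out.
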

It is worth remarking that, in certain cases, these conclusions have been strengthened. In \cite{dfh}, it is shown that if $X$ has a countable boundary then $X$ has an equivalent analytic norm which $\varepsilon$-approximates the original norm. Moreover, if $C(K)$ admits an analytic renorming, then $K$ is countable \cite{h2}. For a norm $||\cdot||$ to be analytic we mean it is a real valued analytic function on $X \setminus \{ 0 \}$. Analytic functions on Banach spaces are defined and explored \cite{w}. 

The Orlicz functions $M$ for which the corresponding Orlicz sequence spaces $l_M$ and Orlicz function spaces $l_M(0,1),$ $l_M(0,\infty)$ have an equivalent $C^{\infty}$ smooth norm were characterised in \cite{mt}. Futhermore, the Orlicz sequence spaces $h_M$ with equivalent analytic norm were characterised in \cite{ht}.

The main result of this paper, Theorem \ref{main}, generalises these results as corollaries. It also takes into account smoothness of injective tensor products, in a manner similar to that of \cite{h}. As in the proof of \cite[Theorem 5.1]{hajekhaydon}, the proof of Theorem \ref{main} makes use of two lemmas (\cite[Lemma 5.2 and Lemma 5.3]{hajekhaydon}) concerning the so-called generalised Orlicz norm, denoted by $|| \cdot ||_{\phi}$. The first lemma provides a condition where $|| \cdot ||_{\phi}$ is equivalent to $|| \cdot ||$.

\begin{definition} \label{def1} Let $B$ be a set. Suppose for every element $t \in B$ there exists a convex function $\phi_t$ on $[0,\infty)$ with $\phi_t(0) = 0$ and $\lim_{\alpha \rightarrow \infty} \phi_t(\alpha) = \infty$ (such functions are called \emph{Orlicz functions}). Define $|| \cdot ||_{\phi}$ on $l_{\infty}(B)$ by
$$ ||f||_{\phi} = \text{ inf } \left \{ \rho > 0 : \sum_{t \in B} \phi_{t} \left(\frac{|f(t)|}{\rho} \right)  \leq 1 \right \}.$$ and define $\ell_{\phi} (B)$ as the set of $f \in \ell_{\infty}(B)$ satisfying $||f||_{\phi} < \infty$.
\end{definition}

\begin{lem} \cite[Lemma 5.2]{hajekhaydon} \label{lem1}
Let $||\cdot||_{\phi}$ be as in Definition \ref{def1}. Suppose there exist $\beta > \alpha > 0$ with the property $\phi_t(\alpha) = 0$ and $\phi_t(\beta) \geq 1$ for all $t \in B$. Then $\ell_{\phi}(B) \cong \ell_{\infty}(B)$ and
$$\alpha||\cdot||_{\phi} \leq ||\cdot||_{\infty} \leq \beta||\cdot||_{\phi}.$$
\end{lem}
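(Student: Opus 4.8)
The plan is to reduce both norm inequalities to two pointwise properties of the functions $\phi_t$, and then read off the isomorphism from the resulting equivalence of norms. I would first record the standing convention (forced by convexity together with $\phi_t(0)=0$ once nonnegativity is assumed) that each $\phi_t$ is nonnegative and nondecreasing. The key structural observation is that, for fixed $f$, the map $\rho \mapsto \sum_{t \in B} \phi_t(|f(t)|/\rho)$ is nonincreasing on $(0,\infty)$, since each $\phi_t$ is nondecreasing; hence the admissible set $\{\rho > 0 : \sum_{t} \phi_t(|f(t)|/\rho) \le 1\}$ is an interval of the form $(\|f\|_{\phi},\infty)$ or $[\|f\|_{\phi},\infty)$, and it suffices to test values of $\rho$ on either side of $\|f\|_{\phi}$. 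The two pointwise facts I would isolate are: \textbf{(a)} $\phi_t(s) = 0$ whenever $0 \le s \le \alpha$; and \textbf{(b)} $\phi_t(s) > 1$ whenever $s > \beta$.

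Fact (a) is immediate: on $[0,\alpha]$ convexity places $\phi_t$ below the chord joining $(0,0)$ and $(\alpha,0)$, that is $\phi_t \le 0$, while $\phi_t \ge 0$, so $\phi_t \equiv 0$ there. Fact (b) is the crux, and the only place where convexity is used essentially. I would prove it using the monotonicity of the difference quotients of a convex function: for $\alpha < \beta < s$,
$$\frac{\phi_t(\beta) - \phi_t(\alpha)}{\beta - \alpha} \le \frac{\phi_t(s) - \phi_t(\beta)}{s - \beta}.$$
The left-hand side equals $\phi_t(\beta)/(\beta - \alpha) \ge 1/(\beta - \alpha) > 0$, so the right-hand side is strictly positive, forcing $\phi_t(s) > \phi_t(\beta) \ge 1$. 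I expect this step to be the main obstacle: an argument using monotonicity alone would only yield $\phi_t(s) \ge 1$, which is too weak, as it would permit the infimum defining $\|\cdot\|_{\phi}$ to be attained on a plateau of $\phi_t$ at height $1$ and thereby break the upper estimate. It is convexity together with $\phi_t(\alpha)=0$ and $\alpha < \beta$ that rules out such a plateau and delivers the strict inequality.

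With (a) and (b) in hand the inequalities follow quickly. For the lower bound, if $\rho \ge \|f\|_{\infty}/\alpha$ then $|f(t)|/\rho \le \alpha$ for every $t$, so by (a) every summand vanishes and $\rho$ is admissible; taking the infimum gives $\|f\|_{\phi} \le \|f\|_{\infty}/\alpha$, i.e.\ $\alpha \|f\|_{\phi} \le \|f\|_{\infty}$. For the upper bound, take any $\rho > \|f\|_{\phi}$; then $\sum_{t} \phi_t(|f(t)|/\rho) \le 1$, so each individual summand is $\le 1$, whence (b) forces $|f(t)| \le \beta\rho$ for all $t$. Thus $\|f\|_{\infty} \le \beta\rho$, and letting $\rho \downarrow \|f\|_{\phi}$ gives $\|f\|_{\infty} \le \beta\|f\|_{\phi}$.

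Finally, for the isomorphism, the chain $\tfrac{1}{\beta}\|f\|_{\infty} \le \|f\|_{\phi} \le \tfrac{1}{\alpha}\|f\|_{\infty}$ shows that $\|f\|_{\phi} < \infty$ for every $f \in \ell_{\infty}(B)$, so $\ell_{\phi}(B) = \ell_{\infty}(B)$ as sets, and that $\|\cdot\|_{\phi}$ (which is a norm, being the usual Luxemburg norm) is equivalent to $\|\cdot\|_{\infty}$. Hence the identity map is a linear isomorphism that is bounded with bounded inverse, and completeness of $(\ell_{\infty}(B),\|\cdot\|_{\infty})$ transfers across the equivalence, so $\ell_{\phi}(B) \cong \ell_{\infty}(B)$ as Banach spaces.
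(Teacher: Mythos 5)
Your proof is correct; the two pointwise facts (vanishing on $[0,\alpha]$ by convexity, and the strict inequality $\phi_t(s)>1$ for $s>\beta$ via monotonicity of difference quotients) are exactly what is needed, and you rightly identify the strictness in (b) as the one place convexity is essential. Note that the paper itself gives no proof of this lemma, simply citing H\'ajek--Haydon, where the argument is the same standard one you have reproduced.
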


We use $||\cdot||_{\phi}$ to define another norm on a more general space $X$, which we also denote by $||\cdot||_{\phi}$. The second lemma gives a sufficient condition for when $||\cdot||_{\phi}$ on $X$ is $C^k$ smooth. It uses the notion of local dependence on finitely many coordinates and generalises \cite[Lemma 5.3]{hajekhaydon}.

\begin{lem} \label{lem2}
Let $||\cdot||_{\phi}$ be as in Lemma \ref{lem1} and let $\Pi: X \rightarrow \ell_{\phi}(B)$ be an embedding (non-linear in general), where the map $x \mapsto \Pi(x)(t)$ is a seminorm which is $C^k$ smooth on the set where it is non-zero, for all $t \in B$. Assume the assignment $||x||_{\phi} = ||\Pi(x)||_{\phi}$ defines an equivalent norm on $X$. Suppose for each $x \in X$, with $||x||_{\phi} = 1$, there exists an open $U \subseteq X$ containing $x$, and finite $F \subseteq B$, such that $\phi_t(|y(t)|) = 0$ when $y \in U$ and $t \in B \backslash F.$ Finally, assume that each $\phi_t$ is $C^{\infty}$ smooth. Then $||\cdot||_{\phi}$ is $C^k$ smooth on $X$.
\end{lem}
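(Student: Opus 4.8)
The plan is to realise $\|\cdot\|_{\phi}$ as the solution of an implicit equation and to apply the Implicit Function Theorem, the whole difficulty being concentrated in showing that the defining function is jointly $C^k$ even though the seminorms $p_t := \Pi(\cdot)(t)$ fail to be smooth where they vanish. First, by positive homogeneity of the norm it suffices to prove that $\|\cdot\|_{\phi}$ is $C^k$ on a neighbourhood of each $x_0 \in X$ with $\|x_0\|_{\phi} = 1$; smoothness at an arbitrary $x \neq 0$ then follows from $\|\lambda x_0\|_{\phi} = \lambda \|x_0\|_{\phi}$ along the ray through $x_0$. Fix such an $x_0$ and let $U \ni x_0$ and the finite set $F \subseteq B$ be as supplied by the hypothesis. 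Writing $p_t(x) = \Pi(x)(t)$, consider
$$\Theta(y,\rho) = \sum_{t \in B} \phi_t\!\left( \frac{p_t(y)}{\rho} \right), \qquad (y,\rho) \in X \times (0,\infty).$$
Since each $\phi_t$ is an Orlicz function that is flat near $0$ (see below), $\rho \mapsto \Theta(y,\rho)$ is continuous and non-increasing, and $\|y\|_{\phi}$ is exactly the value of $\rho$ at which it crosses the level $1$; in particular $\Theta(x_0,1) = 1$. The aim is to solve $\Theta(y,\rho) = 1$ for $\rho = \|y\|_{\phi}$ as a $C^k$ function of $y$.

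Second, and this is the main point, I reduce $\Theta$ to a finite sum near $(x_0,1)$ and prove it is $C^k$ there. By homogeneity of the seminorms, $p_t(y)/\rho = p_t(y/\rho)$, so choosing a neighbourhood $W$ of $(x_0,1)$ small enough that $y/\rho \in U$ whenever $(y,\rho) \in W$, the hypothesis gives $\phi_t(p_t(y/\rho)) = 0$ for every $t \in B \setminus F$; hence $\Theta(y,\rho) = \sum_{t \in F} \phi_t(p_t(y)/\rho)$ on $W$, a finite sum. It remains to show each summand $(y,\rho) \mapsto \phi_t(p_t(y)/\rho)$ is $C^k$. Here is the crux: Lemma \ref{lem1} furnishes $\alpha > 0$ with $\phi_t(\alpha) = 0$, and convexity together with $\phi_t(0) = 0$ forces $\phi_t \equiv 0$ on $[0,\alpha]$. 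Consequently the summand vanishes identically on the open set $\{(y,\rho) : p_t(y) < \alpha \rho\}$, which contains every point with $p_t(y) = 0$, so it is $C^{\infty}$ there; on the complementary open set $\{p_t(y) > 0\}$ it is the composition of the $C^k$ map $y \mapsto p_t(y)$ (and the $C^{\infty}$ map $(s,\rho) \mapsto s/\rho$) with the $C^{\infty}$ function $\phi_t$, hence $C^k$. As these two open sets cover $X \times (0,\infty)$, the summand, and therefore $\Theta$ on $W$, is $C^k$. This flatness of $\phi_t$ near $0$, which absorbs the non-smoothness of the seminorm at its kernel, is exactly the mechanism that makes the argument work and is where I expect the only genuine subtlety to lie.

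Finally, I check the non-degeneracy needed for the Implicit Function Theorem. Differentiating a single term gives $\partial_\rho \phi_t(p_t(y)/\rho) = -\rho^{-2} p_t(y)\, \phi_t'(p_t(y)/\rho) \leq 0$, since $\phi_t$ is non-decreasing. At the solution $\rho = \|y\|_{\phi}$ one has $\Theta(y,\rho) = 1 > 0$, so some term is strictly positive; for that $t$ the argument $p_t(y)/\rho$ lies beyond the flat interval $[0,\alpha]$, whence $p_t(y) > 0$ and, by convexity, $\phi_t'(p_t(y)/\rho) > 0$. Thus $\partial_\rho \Theta < 0$ there. Applying the Implicit Function Theorem to the $C^k$ function $\Theta$ at $(x_0,1)$ yields a $C^k$ function $N$ on a neighbourhood of $x_0$ with $\Theta(y,N(y)) = 1$; since $\rho \mapsto \Theta(y,\rho)$ is strictly decreasing through $1$ near the solution, $N(y) = \|y\|_{\phi}$. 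Homogeneity then propagates $C^k$ smoothness to all of $X \setminus \{0\}$, completing the argument.
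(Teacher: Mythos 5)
Your argument is correct and is essentially the proof the paper intends: the paper establishes this lemma only by reference to H\'ajek--Haydon's Lemma 5.3, remarking that the same implicit function theorem argument goes through when each coordinate seminorm is merely $C^k$ off its kernel, and your write-up fills in exactly that argument (the finite-sum reduction via local dependence on finitely many coordinates, the flatness of $\phi_t$ on $[0,\alpha]$ absorbing the non-smoothness of the seminorms at their kernels, and the strict negativity of $\partial_\rho\Theta$ at the solution). No discrepancies to report.
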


As Lemma \ref{lem2} appears in \cite[Lemma 5.3]{hajekhaydon}, $X$ is taken to be a closed subspace of $\ell_{\infty}(B)$ and $\Pi$ is the identity. The proof uses the fact that each coordinate map $x \rightarrow |x(t)|$ is $C^{\infty}$ smooth on the set where it is non-zero and uses the implicit function theorem to show that $||\cdot||_{\phi}$ is also $C^{\infty}$ smooth. In our case, each coordinate map is $C^k$ smooth on the set where it is non-zero and the same argument guarantees that $||\cdot||_{\phi}$ is $C^k$ smooth.

The first part of the proof of Theorem \ref{main} is concerned with setting up the necessary framework to apply these lemmas. The remainder uses a series of claims to prove they do in fact hold. In the final section, Theorems \ref{h} and \ref{hh} are obtained as corollaries of Theorem \ref{main}, along with some other results and applications.

Before proceeding to the statement of Theorem \ref{main}, a key notion of $w^*$-\emph{locally relatively compact sets} ($w^*$-LRC for short) needs to be introduced. This property is first studied in \cite{pieces}, in the context of polyhedral norms.
\end{section}

\begin{definition}[{\cite[Definition 5]{pieces}}] Let $X$ be a Banach space. We call $E \subseteq X^*$ $w^*$-LRC if given $y \in E$, there exists a $w^*$-open set $U$ such that $y \in U$ and $\overline{E \cap U}^{||\cdot||}$ is norm compact.
\end{definition}

\begin{example}[{\cite[Example 6]{pieces}}] The following sets are $w^*$-LRC. \label{eg1}
\begin{enumerate} 
  \item Any norm compact or $w^*$-relatively discrete subset of a dual space.
  \item Given $X$ with an unconditional basis $(e_i)_{i \in I}$ and $f \in X^*$, define $$ \text{supp}(f) = \{ i \in I : f(e_i) \neq 0 \}.$$ \label{eg1-2} Let $E \subset X^*$ have the property that if $f, g \in E$, then $|\text{supp}(f)| = |\text{supp}(g)|  < \infty$. $E$ is $w^*$-LRC. Indeed, take $f \in E$ and define the $w^*$-open set $U = \{ g \in X^* : 0 < |g(e_i)| < |f(e_i)| + 1 : i \in \text{supp}(f) \}$. Clearly, if $g \in U \cap E$, then supp$(g)$ = supp$(f)$. Thus $U \cap E $ is a norm bounded subset of a finite dimensional space.
\end{enumerate}
\end{example}

\begin{remark} Evidently, $w^*$-LRC sets are preserved under scalar multiplication. Also, the family of $\sigma$-$w^*$-LRC subsets of a dual Banach space forms a $\sigma$-ideal. (This is because if $E$ is $w^*$-LRC and $F \subseteq E$, then $F$ is $w^*$-LRC. And of course any countable union of $\sigma$-$w^*$-LRC sets is again $\sigma$-$w^*$-LRC). But in general they do not behave well under straightforward linear and topological operations. To see this, consider the following.
\begin{enumerate}
\item Let $E = \{ e_n : n \in \mathbb{N} \}$ be the usual basis of $c_0$ and let $F = \{ 0 \}$. These sets are both $w^*$-LRC. However, $0$ is a $w^*$-accumulation point of $E$ and $||e_n - e_m||_{\infty} =1$ whenever $n \neq m$, so $E \cup F$ is not $w^*$-LRC.

\item The set $E = \{ \delta_{\alpha} + 2^{-n} \delta_{\alpha+n} : \alpha < \omega_1 \text{ is a limit ordinal}, n \in \mathbb{N} \}$ is $w^*$-discrete. But $\overline{E}^{||\cdot||} \supseteq  \{ \delta_{\alpha} : \alpha < \omega_1 \text{ is a limit ordinal} \}$. Using the fact that the ordinal $\omega_1$ is not $\sigma$-discrete, we can see that the set $\{ \delta_{\alpha} : \alpha < \omega_1 \text{ is a limit ordinal} \}$, and thus $\overline{E}^{||\cdot||}$, is not $\sigma$-$w^*$-LRC.

\item Consider the space $\ell_1 \oplus \ell_1 (B_{\ell_1}) \equiv \ell_1(\mathbb{N} \cup B_{\ell_1})$. Given $x \in B_{\ell_1}$, denote by $\overline{x}$ its canonical image in $\ell_1(\mathbb{N} \cup B_{\ell_1})$. Let $E = \{ \overline{x} \pm \delta_x : x \in B_{\ell_1} \}$. This set can be shown to be $w^*$-discrete but $E + E \supseteq \{ 2 \overline{x} : x \in B_{\ell_1} \} \equiv 2 B_{\ell_1}$. A conseqeunce of \cite[Proposition 12 (1)]{pieces} is that for an infinite dimensional space $X$, $S_{X^*}$ cannot be covered by a countable union of $w^*$-LRC sets. This result extends to $S_Y$, where $Y$ is any infinite-dimensional subspace of $X^*$. Because of this, $E + E$ is not $\sigma$-$w^*$-LRC.
\end{enumerate}
\end{remark}
The main result is concerned with renorming injective tensor products. Given Banach spaces $X$ and $Y$, the injective tensor product $X \otimes_{\varepsilon} Y$ is the completion of the algebraic tensor product $X \otimes Y$ with respect to the norm
$$||\sum_{i=1}^{\infty} x_i \otimes y_i|| = \sup \left \{ \sum_{i=1}^{\infty} f(x_i)g(y_i) : f \in B_{X^*}, g \in B_{Y^*} \right \} .$$

Also note the following facts. If $I_Y$ is the identity operator on $Y$, then given $f \in X^*$ we define $f^Y = f\otimes I_Y$ on $X \otimes Y$ by $f^Y(\sum_{i=1}^{\infty} x_i \otimes y_i) = \sum_{i=1}^{\infty} f(x_i)y_i$. We have $||f^Y|| =||f||$ and extend to the completion. Similarly define $g^X$ for $g \in Y^*$. A useful fact is $f \otimes g = g \circ f^Y = f \circ g^X. $

Given two boundaries $N \subseteq X^*$ and $M \subseteq Y^*$, the set $\{ f \otimes g : f \in N, g \in M \} $ is a boundary for $X \otimes_{\varepsilon} Y$. To see this, take $u \in X \otimes_{\varepsilon} Y$. There exists $f \in B_{X^*}$ and $g\in B_{Y^*}$ such that $||u|| = (f\otimes g)(u) = ||g^X (u)||$. Then there exists $\hat{f} \in N$ such that $\hat{f} (g^X (u))  = ||u|| = || \hat{f}^Y(u) ||$. Finally, there exists $\hat{g} \in M$ such that $\hat{g}(\hat{f}^Y(u)) = (\hat{f} \otimes \hat{g} )(u)  = ||u||.$

Given a Banach space $Y$ with a $C^k$ smooth renorming, Haydon gave a sufficient condition on $X$ for $X\otimes_{\varepsilon}Y$ to have a $C^k$ smooth renorming (\cite[Corollary 1]{h}). This condition involves a type of operator that are now known as Talagrand operators. Another sufficient condition is given in the main result below. It is worth noting that these conditions are incomparable. For example, the space $C[0,\omega_1]$ satisfies Haydon's condition but not that of Theorem \ref{main}. On the other hand, if we take $K$ to be the Ciesielski-Pol space as seen in \cite{cp}, then $C(K)$ satisfies the hypothesis of Theorem \ref{main} but not Haydon's condition.

\begin{section}{Main Result}
\begin{thm} \label{main} Let $X$ and $Y$ be Banach spaces and let $(E_{n})$ be a sequence of $w^{*}$-LRC subsets of $X^{*}$, such that $E = \bigcup_{n=0}^{\infty} E_{n}$ is $\sigma$-$w^{*}$-compact and contains a boundary of $X$. Suppose further that $Y$ has a $C^k$ smooth norm $|| \cdot ||_Y$ for some $k \in  \mathbb{N} \cup \{\infty\}$. Then $X \otimes_{\varepsilon} Y$ admits a $C^k$ smooth renorming that $\varepsilon$-approximates the canonical injective tensor norm.
\end{thm}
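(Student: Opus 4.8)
The plan is to realise the injective tensor norm as a supremum of smooth seminorms indexed by the boundary, and then to round off that supremum into a single $C^k$ smooth norm using the generalised Orlicz norm of Definition \ref{def1}, with Lemmas \ref{lem1} and \ref{lem2} supplying equivalence and smoothness respectively. First I would normalise: since $B_{X^*}$ is $w^*$-closed, replacing $E$ by $E \cap B_{X^*}$ (and each $E_n$ by $E_n \cap B_{X^*}$) preserves $\sigma$-$w^*$-compactness, the $w^*$-LRC property of the pieces, and the boundary $N \subseteq E$, so I may assume $E \subseteq B_{X^*}$; I would also assume $||\cdot||_Y$ already $\tfrac{\varepsilon}{2}$-approximates the original norm of $Y$. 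Writing $f^Y(u)$ for the image of $u$ under $f \otimes I_Y$, the boundary remark for $X \otimes_{\varepsilon} Y$ gives $||u|| = \sup_{f \in N} ||f^Y(u)||_{\mathrm{orig}}$, so that $\sup_{f \in E} ||f^Y(u)||_Y$ lies within a factor $(1 \pm \varepsilon)$ of $||u||$. I then define $\Pi : X \otimes_{\varepsilon} Y \to \ell_{\infty}(E)$ by $\Pi(u)(f) = ||f^Y(u)||_Y$; each coordinate $u \mapsto ||f^Y(u)||_Y$ is a seminorm which, being $||\cdot||_Y$ composed with the bounded operator $f^Y$, is $C^k$ smooth wherever it is non-zero.

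Next I would set up the Orlicz functions. Disjointifying $E = \bigsqcup_n E_n$ (subsets of $w^*$-LRC sets remain $w^*$-LRC), let $n(f)$ be the index with $f \in E_{n(f)}$. Fix $0 < \alpha < \beta$ close to $1$ and a strictly increasing sequence $a_n \in [\alpha, \beta)$ with $a_n \to \beta$, and choose convex $C^{\infty}$ Orlicz functions $\phi_f$, depending only on $n(f)$, that vanish on $[0, a_{n(f)}]$, are strictly increasing thereafter, and satisfy $\phi_f(\beta) \geq 1$. Since $\phi_f(\alpha) = 0$ and $\phi_f(\beta) \geq 1$ for every $f$, Lemma \ref{lem1} yields $\ell_{\phi}(E) \cong \ell_{\infty}(E)$ together with $\alpha ||\cdot||_{\phi} \leq ||\cdot||_{\infty} \leq \beta ||\cdot||_{\phi}$, so that $||u||_{\phi} := ||\Pi(u)||_{\phi}$ is an equivalent norm; tuning $\alpha, \beta$ near $1$ makes it $\varepsilon$-approximate the tensor norm. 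The flat zone $[0, a_{n(f)}]$ is what allows $\phi_f(||f^Y(\cdot)||_Y)$ to be $C^k$ smooth even across the kernel of $f^Y$.

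The crux is verifying the local finite dependence required by Lemma \ref{lem2}. The key observation is that $T_u : f \mapsto f^Y(u)$ is a compact operator that is $w^*$-to-norm continuous on bounded sets: for a finite tensor $u = \sum_{i=1}^{N} x_i \otimes y_i$ it is the finite-rank map $f \mapsto \sum_i f(x_i) y_i$, which is plainly $w^*$-to-norm continuous, while in general $||T_u - T_{u'}||_{\mathrm{op}} = ||u - u'||_{\varepsilon}$, so $T_u$ is a norm-limit of such maps on $B_{X^*}$. Now fix $u$ with $||u||_{\phi} = 1$, so $||f^Y(u)||_Y \leq \beta$ for all $f$, and fix a small $\eta > 0$; I claim $F = \{ f \in E : ||f^Y(u)||_Y > a_{n(f)} - \eta \}$ is finite. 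Only finitely many pieces are relevant, since $a_n \to \beta$ while $||f^Y(u)||_Y \leq \beta$. Within a single piece $E_n$, if infinitely many $f_j \in E_n$ satisfied $||f_j^Y(u)||_Y > a_n - \eta$, then $w^*$-compactness of $\overline{E_n}^{w^*} \subseteq B_{X^*}$ yields a subnet $f_j \to f_{\infty}$ in the $w^*$-topology, whence $f_j^Y(u) \to f_{\infty}^Y(u)$ in norm and $c := ||f_{\infty}^Y(u)||_Y \geq a_n - \eta$; provided $c$ genuinely exceeds $a_n$, infinitely many terms $\phi_{f_j}(||f_j^Y(u)||_Y)$ are then bounded below by a positive constant, forcing $\sum_f \phi_f(||f^Y(u)||_Y) = \infty$ and contradicting $||u||_{\phi} = 1$. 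Using the uniform estimate $||f^Y(v) - f^Y(u)|| \leq ||v - u||_{\varepsilon}$ for $||f|| \leq 1$, finiteness of $F$ then produces a neighbourhood $V \ni u$ on which $\phi_f(||f^Y(v)||_Y) = 0$ for every $v \in V$ and every $f \notin F$.

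The delicate point, and the step I expect to be the main obstacle, is precisely the borderline case where the $w^*$-cluster value $c$ equals the threshold $a_n$: there the accumulating contributions degenerate to $0$ and the summability argument gives no contradiction. I would handle this by choosing the thresholds $a_n$ (and the slack $\eta$) so as to reserve a genuine gap between the values $||f^Y(u)||_Y$ compatible with $||u||_{\phi} = 1$ and the relevant $a_n$, exploiting that there are only countably many pieces and that inside each $w^*$-LRC chart the function $f \mapsto ||f^Y(u)||_Y$ is norm-continuous on a norm-compact set. This reconciliation of the $w^*$-topology, where the boundary and its $\sigma$-$w^*$-compactness reside, with the norm topology, where smoothness and the coordinate values live, mediated by the $w^*$-LRC property and the compactness of $T_u$, is the heart of the argument and is where I expect the series of claims to be spent. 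Once local finite dependence is in hand, Lemma \ref{lem2} applies directly, since each $\phi_f$ is $C^{\infty}$ and each coordinate seminorm is $C^k$ smooth off its kernel; this makes $||\cdot||_{\phi}$ a $C^k$ smooth norm, and combined with the equivalence and $\varepsilon$-approximation from Lemma \ref{lem1}, it is the required renorming of $X \otimes_{\varepsilon} Y$.
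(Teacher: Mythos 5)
Your overall architecture matches the paper's: embed $X\otimes_{\varepsilon}Y$ into $\ell_{\infty}$ of an index set via $\Pi(u)(f)=\|f^Y(u)\|_Y$, choose flat-bottomed $C^{\infty}$ Orlicz functions, and invoke Lemmas \ref{lem1} and \ref{lem2}, with the real work being the local finite dependence. You have also correctly located the obstruction: the borderline case where values $\|f_j^Y(u)\|_Y$ accumulate at the threshold. But the fix you propose does not close the gap. With thresholds $a_{n(f)}$ depending only on the piece index, one can have a single $w^*$-LRC piece $E_n$ containing a norm-convergent sequence of distinct functionals $f_j$ with $\|f_j^Y(u)\|_Y$ strictly above $a_n$ but decreasing to $a_n$; then infinitely many terms $\phi_{f_j}(\|f_j^Y(u)\|_Y)$ are positive while their sum can still be at most $1$, so $\|u\|_{\phi}=1$ is consistent with the failure of the finite-dependence hypothesis of Lemma \ref{lem2} at the point $u$ itself, let alone on a neighbourhood. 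No choice of a single threshold per piece and a single slack $\eta$ can ``reserve a gap'' uniformly over all $u$ on the unit sphere, since the attainable values $\|f^Y(u)\|_Y$ sweep out whole intervals as $u$ varies. Separately, your assertion that ``only finitely many pieces are relevant, since $a_n\to\beta$'' is unjustified: high thresholds near $\beta$ do not exclude functionals whose values are also near $\beta$, and ruling that out requires a strict-inequality argument (the paper's Claim 1, $\|v\|<\|v\|_{\phi}$ for $v\neq 0$) that your setup does not provide.

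The paper resolves both issues with two ideas absent from your proposal. First, the index set is not all of $E$ but a discretised skeleton $B=\bigcup_n\varGamma_n$, where each $\varGamma_n$ is a maximal $\varepsilon_n$-separated subset (hence an $\varepsilon_n$-net) of a piece $L_n$ of a refined decomposition; separation guarantees that any totally bounded set, in particular any relatively norm-compact $w^*$-LRC chart, meets each $\varGamma_n$ in a finite set, which kills accumulation within a single piece. Second, the threshold is $1/\psi(f)$ where $\psi$ depends on the full set $I(f)=\{n: f\in\overline{L_n}^{w^*}\}$, engineered so that at a $w^*$-accumulation point $f$ of a sequence $(f_j)\subseteq B$ one gets a quantified jump $\psi(f)\geq\lim_j\psi(f_j)+6\varepsilon_m$ (Claims 2a--2c); this strict gap is exactly what converts the limiting inequality $\alpha(f\otimes g)(u)\geq 1$ into the contradiction $\alpha(f\otimes g)(u)<1$. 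The refined decomposition into the sets $L_n$ (via norm-$F_{\sigma}$ splittings of $A_n\setminus\bigcup_{k<n}A_k$, using \cite[Proposition 12 (3)]{pieces}) is what makes both devices possible. Without the discretisation and the $\psi$-jump, your argument cannot handle cross-piece or within-piece accumulation at the threshold, so the proof as proposed is incomplete at its central step.
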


The proof of this theorem is based to some degree on that of \cite[Theorem 7]{pieces}. Given its technical nature, some of that proof is repeated here for clarity. We ask the reader to excuse any redundancy.

\begin{proof}
To begin, we can assume $E$ is a boundary and ${\overline{E_{n}}^{w^{*}}} \subseteq E$ for all $n  \in \mathbb{N}.$ Indeed, if neccessary, taking $E=\bigcup_{m=0}^{\infty} K_{m}$, where $K_m$ is $w^*$-compact, we can consider for all $n,m \in \mathbb{N},$
$$E_{n} \cap K_{m} \cap B_{X^{*}}.$$

By \cite[Proposition 12 (3)]{pieces} there exist $w^{*}$-open sets $V_{n}$ such that if we set $A_{n} = {\overline{E_{n}}^{w^{*}}} \cap V_{n}$, then
$$E_{n} \subseteq A_{n} \subseteq {\overline{E_{n}}^{|| \cdot ||}} \text{ and } A_{n} \text{ is } w^{*}\text{-LRC}.$$

Each $A_{n}$ is both norm $F_{\sigma}$ and norm $G_{\delta}$. So for each $n \in \mathbb{N}$, $A_{n} \backslash \bigcup_{k<n} A_{k}$ will in particular be norm $F_{\sigma}$. Now we write
$$A_{n} \backslash \bigcup_{k<n} A_{k} = \bigcup_{m=0}^{\infty} H_{n,m},$$

where each $H_{n,m}$ is norm closed. By arrangement, we assume $H_{n,m} \subseteq H_{n,m+1}$ for all $m \in \mathbb{N}$ and, for convenience, we set $H_{n,-1} = \varnothing $. Let $\pi : \mathbb{N}^{2} \longrightarrow \mathbb{N}$ be a bijection and for all $i,j\in \mathbb{N}$, define
$$L_{\pi(i,j)}=H_{i,j} \backslash H_{i,j-1}.$$

Clearly $E$ is the disjoint union of the $L_{n}$ and $\overline{L_{n}}^{w^{*}} \subseteq \overline{E_{p}}^{w^{*}} \subseteq E$, where $n=\pi (p,q).$ Given $f\in E,$ let
$$I(f) = \{ n \in \mathbb{N} : f \in \overline{L_{n}}^{w^{*}} \} \text{ and } n(f) = \text{min } I(f).$$

Now fix $\varepsilon > 0$. We define $\psi:E \longrightarrow (1,1+\varepsilon)$ by\
$$\psi(f) = 1 + \frac{1}{2} \varepsilon \cdot 2^{-n(f)} \left( 1+ \frac{1}{4} \sum_{i \in I(f)} 2^{-i} \right).$$

Set $\varepsilon_{n} = \frac{1}{96}\varepsilon \cdot 4^{-n}$. Fix $n$. As $\psi(L_{n}) \subseteq (1,1+\varepsilon),$ there is a finite partition of $L_{n}$ into sets J, such that diam$(\psi(J)) \leq \varepsilon_{n}.$

Let $P= \{I \subseteq J : I \text{ is } \varepsilon_{n} \text{-separated}\}.$ This set is non-empty because any singleton is in $P$. For a chain $T \subseteq P$ we have $ \bigcup_{N \in T} N \in P, $ so we can apply Zorn's Lemma to get $\varGamma \subseteq J,$ a maximal $\varepsilon_{n} \text{-separated}$ subset of $J.$ By maximality, $\varGamma$ is also an $\varepsilon_{n}$-net. And by the $\varepsilon_{n}$-separation, for a totally bounded set $M \subseteq J$, the intersection $M \cap \varGamma$ is finite.

By considering the finite union of these $\varGamma$, there exists $\varGamma_{n} \subseteq L_{n}$, with the property that given $f \in L_{n}$ there exists $h \in \varGamma_{n}$ so that
\begin{equation}\label{1}\tag{1} |\psi (f) - \psi (h) | \leq \varepsilon_{n} \text{ and } || f - h || \leq \varepsilon_{n}. \end{equation}

Moreover, if $M \subseteq L_{n}$ is totally bounded, $M \cap \varGamma_{n}$ is finite. Now define $B = \bigcup_{n=0}^{\infty} \varGamma_{n}$. We are now ready to define $|| \cdot ||_{\phi}$ on $\ell_{\infty}(B)$.

For each $f \in B$ we pick a $C^{\infty}$ Orlicz function $\phi_{f}$ so that
\begin{align*}
&\phi_{f}(\alpha) = 0 \text{ if } \alpha \leq \frac{1}{\psi(f)},\\
&\phi_{f}(\alpha) > 1 \text{ if } \alpha \geq \frac{1}{\theta(f)}, \text{ where } \theta(f) = \psi(f) - \varepsilon_{n}.
\end{align*}

We define $||\cdot||_{\phi}$ with respect to these functions, as per Definition \ref{def1}. By taking $(1 + \varepsilon)^{-1}$ and $1$ as the constants in the hypothesis of Lemma \ref{lem1} we have $l_{\phi}(B) \cong l_{\infty}(B)$ and $||\cdot ||_{\infty} \leq ||\cdot ||_{\phi} \leq (1 + \varepsilon) ||\cdot ||_{\infty}.$

We embed $X \otimes_{\varepsilon} Y$ into $\ell_{\infty} (B)$ by setting $\Pi(u) (f) = ||f^Y (u) ||_Y, f \in B$. The coordinate map $u \rightarrow ||f^Y (u)||$ is a seminorm which is $C^k$ smooth on the set where it is non-zero for each $f \in B$. Since $|| \Pi (u) ||_{\infty} = ||u||,$ it follows that $|| \cdot || \leq || \cdot ||_{\phi} \leq (1+ \varepsilon) ||\cdot||$ on $X$.

Suppose for the sake of contradiction that the remaining hypothesis of Lemma \ref{lem2} does not hold. Then we can find $u \in X \otimes_{\varepsilon} Y$ with $||u||_{\phi} = 1$, $(u_n) \subseteq X \otimes_{\varepsilon} Y$ with $u_n \rightarrow u$ and distinct $(f_n) \subseteq B$ such that $\phi_{f_n} (||f_{n}^{Y} (u_n)||) > 0$, for all $n$. Then $\psi(f_n) ||f_{n}^{Y} (u_n)|| > 1$ for all $n$.

Take a subsequence of $(f_n)$, again called $(f_n)$, such that $\psi(f_n) \rightarrow \alpha$ for some $\alpha \in \mathbb{R}$. Now take $(g_n) \subseteq S_{Y^*}$ such that $||f_{n}^{Y} (u_n)|| = g_n (f_{n}^{Y} (u_n)).$ Let $(f,g) \in B_{X^*} \times B_{Y^*}$ be an accumulation point of $(f_n,g_n)$ in the product of the $w^*$-topologies. Then $f \otimes g$ is a $w^*$-accumulation point of $(f_n\otimes g_n)$ and $\alpha (f \otimes g) (u) \geq 1$.

The remainder of the proof is concerned with obtaining the contradiction $\alpha (f\otimes g)(u) < 1.$

{\bf Case 1:} $\alpha = 1.$ With $\alpha = 1$, it is evident that $ \alpha(f\otimes g)(u) = (f\otimes g)(u)  \leq ||u||$. The following claim ensures $||u|| < 1$.\\
{\bf Claim 1:} If $v \neq  0,$ then $||v|| < ||v||_{\phi}.$ \\
Let $||v|| = 1$ and pick  $p \in E$, $q \in S_{Y^*}$ such that $1= (p \otimes q)(v).$ As noted above, this is possible because $E$ and $S_{Y^*}$ are boundaries of $X$ and $Y$, respectively. By (\ref{1}) above, let $r \in B$ such that $||p-r|| \leq \varepsilon_n$ for an appropriate $n$. Observe that $\theta(r)((r\otimes q) (v)) \leq ||v||_{\phi}$ holds. Indeed,
\begin{align*}
\sum_{l \in B} \phi_l \left ( \frac{||l^Y(v)||}{\theta(r) (r \otimes q)(v)} \right ) & \geq \phi_r \left ( \frac{||r^Y(v)||}{\theta(r) q(r^Y(v))} \right ) \\
& \geq \phi_r \left ( \frac{1}{\theta(r)} \right ) > 1. 
\end{align*}

Now to prove the claim,
\begin{align*}1 & = (p \otimes q) (v)\\ & = (r \otimes q) (v) + ((p-r) \otimes q) (v) \\ & = \theta(r) (r \otimes q) (v) + (1 - \theta (r)) (r \otimes q)(v) + ((p-r) \otimes q) (v) \\ & \leq ||v||_{\phi} + (1 - \theta (r)) (r \otimes q)(v) + ((p-r) \otimes q) (v).
\end{align*}
So we are done if $(\theta (r) -1) (r \otimes q)(v) + ((r-p) \otimes q) (v) > 0.$ Indeed,
\begin{align*}\theta(r) - 1 &= \psi(r) - \varepsilon_n - 1\\ &\geq \frac{1}{2} \varepsilon \cdot 2^{-n(r)} - \varepsilon_n\\ &\geq \frac{1}{2} \varepsilon \cdot 2^{-n} - \varepsilon_n.
\end{align*}

Also, $(r\otimes q)(v) = r(q^X (v)) \geq 1 - ||p-r||\cdot||q^X(v)|| \geq \frac{1}{2}.$ Thus,
\begin{align*}(\theta (r) -1) (r \otimes q)(v) + ((r-p) \otimes q) (v) & \geq \frac{1}{4} \varepsilon \cdot 2^{-n} - \frac{1}{2} \varepsilon_n - \varepsilon_n\\
&= \frac{1}{4} \varepsilon \cdot 2^{-n} - \frac{3}{2} \varepsilon_n\\
&= \frac{1}{4} \varepsilon \cdot 2^{-n} - \frac{1}{64} \varepsilon \cdot 4^{-n} > 0.
\end{align*}
And the claim is proven.\\

{\bf Case 2:} $\alpha > 1$.\\
We'll first prove $f \in E$.

Fix $N$ large enough so that $1 + \varepsilon \cdot 2^{-N} < \frac{1}{2} (1 + \alpha).$ Because $\psi(f_n) \rightarrow \alpha$ we have $\psi(f_m) > \frac{1}{2} (1 + \alpha)$ for all $m$ large enough. Hence, $n(f_m) < N$. Therefore, $f_m \in  \bigcup_{k<N} \overline{L_k}^{w*}$ for all such $m$. By $w^*$-closure, $f \in \bigcup_{k<N} \overline{L_k}^{w*} \subseteq E.$

Now the aim is to prove $\psi(f) > \alpha.$

We can assume $f_n \neq f \text{ for all } n \in \mathbb{N}$, because the $f_n$ are distinct.

Now fix the unique $m$ such that $f \in L_m$ and let
$$ J = I(f) \cup \{ k \in \mathbb{N} : k \geq m+2 \}.$$ Clearly $m \in I(f).$ Let $(p,q) \in \mathbb{N}^2$ such that $m = \pi (p,q).$ We have $L_m \subseteq A_p.$ Since $A_p$ is $w^*$-LRC, there exists a  $w^*$-open set $U \ni f$, such that $A_p \cap U$ is relatively norm compact.

From before, $\varGamma_{\pi (p,k)} \cap U$ is finite for all $k \in \mathbb{N}$, since $\varGamma_{\pi (p,k)} \subseteq A_p$. So the set
$$ V = U \backslash \left( \bigcup_{i \in \mathbb{N} \backslash J} \overline{L_i}^{w^*} \cup \left( \bigcup_{k=0}^q \varGamma_{\pi (p,k)} \backslash \{f \} \right) \right)$$ is $w^*$-open. Moreover, because $f \notin \bigcup_{i \in \mathbb{N} \backslash J} \overline{L_i}^{w^*},$ we have $f \in V.$ We assume from now on that $f_n \in V.$

{\bf Claim 2a} $m \notin I(f_n).$

If $m \in I(f_n)$, then
$$f_n \in \overline{L_m}^{w^*} \cap V \subseteq \overline{L_m \cap V}^{w^*} =  \overline{L_m \cap V}^{|| \cdot ||} \subseteq  \overline{L_m}^{|| \cdot ||} \subseteq H_{p,q}.$$

It follows that $f_n \in H_{p,k} \backslash H_{p,k-1} = L_{\pi(p,k)}$ for some $k \leq q.$ On the other hand, $f_n \in B,$ so $f_n \in L_{\pi(p,k)} \cap B = \varGamma_{\pi(p,k)}.$ However, this cannot be the case, since $f_n \in V \backslash \{ f \}.$\\

{\bf Claim 2b} $I(f_n) \subseteq J$.\\
Let $i \in I(f_n)$. If $i \notin J,$ then $f_n \in \bigcup_{j \in \mathbb{N} \backslash J}  \overline{L_j \cap V}^{w^*},$ but this contradicts $f_n \in V.$

{\bf Claim 2c} $\psi(f) - \psi(f_n) \geq \frac{1}{16} \varepsilon \cdot 4^{-m} = 6 \varepsilon_m$.\\
First note $n(f_n) \geq n(f),$ using Claim 2b and $n(f) = \text{ min } I(f) = \text{ min } J.$ There are two cases to consider. If $n(f_n) > n(f)$, then
$$\psi(f) - \psi(f_n) \geq 1 + \frac{1}{2} \varepsilon \cdot 2^{-n(f)} - (1 + \frac{3}{4} \varepsilon \cdot 2^{-n(f_n)}) \geq \frac{1}{8}  \varepsilon \cdot 2^{-n(f)} \geq \frac{1}{8} \varepsilon \cdot 2^{-m}.$$

And if $n(f_n) = n(f)$, then

\begin{align*}
\psi(f) - \psi(f_n) & \geq \frac{1}{8} \varepsilon \cdot 2^{-n(f)} \left( \sum_{i \in I(f)} 2^{-i} - \sum_{i \in I(f_n)} 2^{-i} \right)\\
& = \frac{1}{8} \varepsilon \cdot 2^{-n(f)} \left( \sum_{i \in I(f) \backslash I(f_n)} 2^{-i} - \sum_{i \in I(f_n) \backslash I(f)} 2^{-i} \right)\\
& \geq \frac{1}{8} \varepsilon \cdot 2^{-n(f)} \left( 2^{-m} - \sum_{i \in J \backslash I(f)} 2^{-i} \right) \\
& \geq \frac{1}{8} \varepsilon \cdot 2^{-n(f)} \cdot 2^{-m-1} \geq \frac{1}{16} \varepsilon \cdot 4^{-m}.
\end{align*}

{\bf Claim 2d} For $h \in B, ||h\otimes g||_{\phi} \leq \frac{1}{\theta(h)}.$

If $|(h \otimes g)(v)| > \frac{1}{\theta(h)},$ then $$\sum_{l \in B} \phi_{l}(||l^Y(v)||) \geq \phi_h (||h^Y(v)||) \geq \phi_h ((h \otimes g)(v))  > 1 \Longrightarrow ||v||_{\phi} > 1.$$ So, $||h\otimes g||_{\phi} = \sup \{ |(h \otimes g )(v)| : ||v||_{\phi} \leq 1 \} \leq \frac{1}{\theta(h)}.$ \\

We can now prove $\alpha (f\otimes g)(x) < 1.$
By \eqref{1}, take $h \in B$ such that $||f-h|| \leq \varepsilon_n$ and $|\psi(f) - \psi(h)| \leq \varepsilon_n$. We then have
\begin{align*} \alpha || f \otimes g ||_{\phi} &\leq \alpha (||h\otimes g||_{\phi} + ||(f-h)\otimes g||_{\phi})\\ 
&\leq \alpha(|| h \otimes g ||_{\phi} + || (f-h) \otimes g || )\\
& \leq \alpha \left ( \frac{1}{\theta(h)} + \varepsilon_n \right).
\end{align*}

So we are done if $\alpha(\frac{1}{\theta(h)} + \varepsilon_n) < 1.$
Well,
\begin{align*}& 1 - \frac{\alpha}{\theta(h)} - \alpha \varepsilon_n > 0 \\
\iff & \theta(h) - \alpha - \varepsilon_n \theta(h) \alpha> 0\\
\iff & \psi(h) - \varepsilon_n - \alpha - \varepsilon_n \theta(h) \alpha> 0.
\end{align*}

By claim 2c, we have $\psi(h) - \varepsilon_n - \alpha \geq 4 \varepsilon_n$ and since $\theta(h), \alpha < 2$, it follows that $ \varepsilon_n \theta(h) \alpha < 4 \varepsilon_n$.

And so, $\alpha ||f\otimes g ||_{\phi} < 1 \Longrightarrow \alpha (f\otimes g)(u) < 1.$
\end{proof}
\end{section}

\section{Applications}
\begin{cor} \label{cor1}
Suppose $X$ has a $\sigma$-$w^*$-LRC and $\sigma$-$w^*$-compact boundary. Then $X$ has a $C^{\infty}$ renorming.
\end{cor}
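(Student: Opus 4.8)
The plan is to invoke Theorem \ref{main} directly, with the second factor $Y$ chosen to be the scalar field. First I would note that $\mathbb{R}$, viewed as a one-dimensional Banach space, carries the norm $t \mapsto |t|$, which is $C^\infty$ smooth on $\mathbb{R} \setminus \{0\}$. So $\mathbb{R}$ is a legitimate choice of $Y$ with $k = \infty$, and the only remaining task is to check the hypotheses that Theorem \ref{main} places on $X$ and $X^*$.

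Those hypotheses are already handed to us. Let $B \subseteq B_{X^*}$ be the boundary supplied by the statement, which is simultaneously $\sigma$-$w^*$-LRC and $\sigma$-$w^*$-compact. Writing $B = \bigcup_{n=0}^{\infty} E_n$ with each $E_n$ a $w^*$-LRC set exhibits $B$ as a countable union of $w^*$-LRC subsets of $X^*$; and setting $E = B$, this same set is $\sigma$-$w^*$-compact and contains a boundary, namely $B$ itself. Thus the sequence $(E_n)$ satisfies every requirement of Theorem \ref{main}.

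Applying the theorem with these data produces a $C^\infty$ smooth renorming of $X \otimes_{\varepsilon} \mathbb{R}$. To convert this into a statement about $X$, I would identify $X \otimes_{\varepsilon} \mathbb{R}$ with $X$ via the isometric isomorphism $x \otimes \lambda \mapsto \lambda x$: indeed $B_{\mathbb{R}^*} = [-1,1]$, so $||x \otimes 1|| = \sup\{ f(x) : f \in B_{X^*} \} = ||x||$, and pulling the smooth norm back along this isometry yields the desired $C^\infty$ renorming of $X$.

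There is essentially no obstacle here: the whole content of the corollary is already contained in Theorem \ref{main}, and the sole role of the scalar field is to collapse the tensor-product conclusion to one about $X$. The only step meriting even a line of verification is the isometric identification $X \otimes_{\varepsilon} \mathbb{R} \cong X$; once that is in place the corollary is immediate.
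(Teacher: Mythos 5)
Your proposal is correct and is exactly the paper's argument: the paper's proof consists of the single line ``Apply Theorem \ref{main} to $X \otimes_{\varepsilon} \mathbb{R} = X$,'' and you have merely spelled out the routine verifications (that $|\cdot|$ is $C^\infty$ smooth off the origin and that $X \otimes_{\varepsilon} \mathbb{R}$ is isometric to $X$) that the paper leaves implicit. No further comment is needed.
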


\begin{proof}
Apply Theorem \ref{main} to $X \otimes_{\varepsilon} \mathbb{R} = X$.
\end{proof}

We can now prove Theorems \ref{h} and \ref{hh} as corollaries of Corollary \ref{cor1}.

\begin{proof}[Proof of Theorem \ref{h}]
Any norm compact subset of $X^*$ is trivially $w^*$-LRC. The result follows from Corollary \ref{cor1}.
\end{proof}

\begin{proof}[Proof of Theorem \ref{hh}]
Let $K = \bigcup_{n=0}^{\infty} D_n$, where each $D_n$ is relatively discrete. Let $\delta_t$ be the usual evaluation functionals, $\delta_t(f) = f(t)$. Then $E_n = \{ \pm \delta_t : t \in D_n \}$ is $w^*$-relatively discrete and so $w^*$-LRC. Moreover, $E = \bigcup_{n=0}^{\infty} E_n$ is a $w^*$-compact boundary of $C(K)$ because given any $f \in C(K)$, there exists $t \in K$ such that $||f||_{\infty} = |f(t)|$, by compactness.
\end{proof}
The corollaries below are new results. Before presenting them, a definition and a theorem appearing in \cite{pieces} are needed.

\begin{definition}[{\cite[Definition 2]{pieces}}]
Let $X$ be a Banach space. We say a set $F \subseteq X^*$ is a \emph{relative boundary} if, whenever $x \in X$ satisfies $\sup \{ f(x) : f \in F \} = 1$, there exists $f \in F$ such that $f(x) = 1$.
\end{definition}

\begin{example} Any boundary and any $w^*$-compact set will be a relative boundary.

\end{example}

\begin{thm}[{\cite[Theorem 4]{pieces}}]\label{c}
Let $X$ be a Banach space and suppose we have sets $S_n \subseteq S_X$ and an increasing sequence $H_n \subseteq B_{X^*}$ of relative boundaries, such that $S_X = \bigcup_{n=0}^{\infty} S_n$ and the numbers
$$b_n = \inf \{ \sup \{h(x): h \in H_n\} : x \in S_n \} $$
are strictly positive and converge to 1. Then for a suitable sequence $(a_n)_{n=0}^{\infty}$ of numbers the set $F = \bigcup_{n=0}^{\infty} a_n (H_n \backslash H_{n-1})$ is a boundary of an equivalent norm.
\end{thm}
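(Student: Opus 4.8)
The plan is to let $F$ itself define the norm. After replacing each $H_n$ by $H_n \cup (-H_n)$ --- which preserves the hypotheses, since relative boundaries are stable under symmetrisation, the sets stay increasing, and the numbers $b_n$ stay in $(0,1]$ and convergent to $1$ --- I would set $|||x||| = \sup\{ f(x) : f \in F \}$ and show that, for the right weights, this is an equivalent norm of which $F$ is a boundary. Put $H_{-1} = \varnothing$, $G_n = H_n \setminus H_{n-1}$, $q_n(x) = \sup\{ f(x) : f \in G_n \}$ and $p_n(x) = \sup\{ h(x) : h \in H_n \} = \max_{k \leq n} q_k(x)$. The conceptual key is that, by homogeneity, the relative boundary property of $H_n$ is exactly the assertion that $p_n(x)$ is attained whenever $p_n(x) > 0$ (rescale $x$ so that the supremum equals $1$). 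The suitable weights are $a_n = 1/c_n$, where $c_n = \inf_{m \geq n} b_m$; this sequence is non-increasing, tends to $1$, and lies in $[1, 1/c_0]$ with $c_0 = \inf_m b_m > 0$.

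For equivalence, the bound $|||x||| \leq c_0^{-1}\|x\|$ is immediate from $F \subseteq c_0^{-1} B_{X^*}$. For the reverse bound, given $x \in S_X$ choose $m$ with $x \in S_m$; then $p_m(x) \geq b_m \geq c_m$, so some $k \leq m$ has $q_k(x) = p_m(x) \geq c_m \geq c_k$, whence $a_k q_k(x) = q_k(x)/c_k \geq 1$ and $|||x||| \geq \|x\|$. Thus $\|\cdot\| \leq |||\cdot||| \leq c_0^{-1}\|\cdot\|$.

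The substance is to show $F$ is a boundary, i.e. that $|||x||| = \sup_n a_n q_n(x)$ is attained for every $x \neq 0$. I would first attain the outer supremum over $n$. If $|||x||| > \|x\|$, then $a_n q_n(x) \leq a_n\|x\| = \|x\|/c_n \to \|x\| < |||x|||$, so all but finitely many indices satisfy $a_n q_n(x) < |||x|||$ and the supremum is a maximum over a finite set. If instead $|||x||| = \|x\|$, the norming index $k$ found above already gives $a_k q_k(x) \geq \|x\| = |||x|||$, so the supremum is again attained. In either case let $n^*$ be the least index with $a_{n^*} q_{n^*}(x) = |||x|||$. Since $(a_n)$ is non-increasing, minimality forces $q_j(x) < |||x|||/a_j \leq |||x|||/a_{n^*} = q_{n^*}(x)$ for all $j < n^*$, so $p_{n^*-1}(x) < q_{n^*}(x)$ and hence $p_{n^*}(x) = q_{n^*}(x) > 0$. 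By the relative boundary property, $p_{n^*}(x)$ is attained at some $h^* \in H_{n^*}$; as $h^*(x) = q_{n^*}(x) > p_{n^*-1}(x)$ we must have $h^* \in G_{n^*}$, and then $a_{n^*} h^* \in F$ realises $|||x|||$.

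I expect the attainment step to be the main obstacle: a priori the supremum defining $|||x|||$ runs over the infinitely many shells $a_n G_n$ and need not be attained. The choice $a_n \to 1$ is precisely what stops high-index shells from competing once $|||x||| > \|x\|$, while the separate treatment of the case $|||x||| = \|x\|$ (where the norming estimate itself supplies an attaining index) disposes of the borderline; the relative boundary hypothesis is then exactly the ingredient that promotes attainment of the outer supremum to a genuine norming functional lying in a single shell.
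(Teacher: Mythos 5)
The paper does not prove this statement: it is imported verbatim from \cite[Theorem 4]{pieces} and used as a black box in Corollary \ref{b}, so there is no internal proof to compare yours against. Judged on its own, your argument is correct and complete, and it is essentially the natural proof of this result: the choice $a_n = 1/\inf_{m\ge n} b_m$, the two-sided equivalence estimate, the observation that either $|||x|||>\|x\|$ (so the tail shells are eventually strictly dominated because $a_n\to 1$) or $|||x|||=\|x\|$ (so the norming index from the lower estimate already attains), and the use of minimality of $n^*$ together with monotonicity of $(a_n)$ to force the attaining functional out of $H_{n^*-1}$ all check out. One point deserves emphasis: your symmetrisation step is not merely cosmetic but necessary --- for non-symmetric $H_n$ the functional $\sup\{f(x):f\in F\}$ is only an asymmetric norm, and one can cook up examples (already on $X=\mathbb{R}$, with $H_n=\{1,-\tfrac12\}$, $S_0=S_X$, $S_n=\{1\}$ for $n\ge 1$) where no scaling makes the literal $F$ a boundary of any norm. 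The price is that the $F$ you produce is $\bigcup_n a_n(\widetilde H_n\setminus \widetilde H_{n-1})$ for the symmetrised sets $\widetilde H_n=H_n\cup(-H_n)$, which is not literally the set in the statement; this is the right reading of the theorem, and it is harmless for the paper's applications (in Corollaries \ref{b} and \ref{cor14} the sets $H_n=\{h\in B_{X^*}:|\mathrm{supp}(h)|\le n\}$ are already symmetric, and the resulting $F$ is still $\sigma$-$w^*$-LRC and $\sigma$-$w^*$-compact, which is all that Corollary \ref{cor1} needs).
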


Given a Banach space with an unconditional basis $(e_i)_{i \in I}$ and $x = \sum_{i\in I} x_i e_i$, let $e_{i}^*(x) = x_i$. For $\sigma \subseteq I$, let $P_{\sigma}$ denote the projection given by $P_{\sigma} (x) = \sum_{i \in \sigma} e_{i}^{*}(x)e_i.$

\begin{cor} \label{b}
Let $X$ have a monotone unconditional basis $(e_i)_{i\in I}$, with associated projections $P_{\sigma}$, $\sigma \subseteq I$, and suppose we can write
 $S_X = \bigcup_{n=1}^{\infty} S_n$ in such a way that the numbers
   $$c_n = \inf \{ \sup \{ || P_{\sigma} (x) || : \sigma \subseteq I, |\sigma| = n \} : x \in S_n \} $$
are strictly positive and converge to 1. Then $X$ admits an equivalent $C^{\infty}$ smooth norm.
\end{cor}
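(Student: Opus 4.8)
The plan is to manufacture an equivalent norm on $X$ together with a boundary for it that is simultaneously $\sigma$-$w^*$-LRC and $\sigma$-$w^*$-compact, and then simply to quote Corollary \ref{cor1}. The equivalent norm will be produced by feeding the data $(S_n)$ and $(c_n)$ into Theorem \ref{c}; the point of the unconditional basis is that the relative boundaries one feeds in can be taken to consist of functionals of small support, which is exactly the feature on which both the $w^*$-LRC property and the $w^*$-compactness will rest.

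Concretely, for $n \ge 1$ I would set
\[ H_n = \{ h \in B_{X^*} : |\mathrm{supp}(h)| \le n \}, \]
and $H_0 = \{0\}$, the support being taken relative to $(e_i)$. The bridge to the hypotheses of Theorem \ref{c} is the identity $\sup\{ h(x) : h \in H_n \} = \sup\{ \|P_\sigma(x)\| : |\sigma| \le n \}$, valid for every $x$: each $h \in H_n$ equals $P_\sigma^* g$ with $\sigma = \mathrm{supp}(h)$ and some $g \in B_{X^*}$, and $\sup_{g \in B_{X^*}} g(P_\sigma(x)) = \|P_\sigma(x)\|$. Together with monotonicity of the basis (so that $\|P_\sigma\| \le 1$), this yields $c_n \le b_n \le 1$ for the numbers $b_n = \inf\{\sup\{h(x):h\in H_n\} : x \in S_n\}$, whence $b_n > 0$ and $b_n \to 1$. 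The sequence $(H_n)$ is visibly increasing, and each $H_n$ is a relative boundary for free once it is shown to be $w^*$-compact, since $w^*$-compact sets are relative boundaries (after a harmless reindexing of the $S_n$ to start at $n=0$).

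The two facts I would isolate as the technical heart are that $H_n$ is $w^*$-compact and that $H_n$ is $\sigma$-$w^*$-LRC. For the first it suffices to see that $H_n$ is $w^*$-closed in the $w^*$-compact ball $B_{X^*}$: if $h_\alpha \to h$ in the $w^*$-topology with $|\mathrm{supp}(h_\alpha)| \le n$, and $h$ were nonzero on $n+1$ coordinates $i_0,\dots,i_n$, then $h_\alpha(e_{i_k}) \to h(e_{i_k}) \ne 0$ would force all these indices into $\mathrm{supp}(h_\alpha)$ on a common tail, contradicting $|\mathrm{supp}(h_\alpha)| \le n$. For the second I would partition $H_n = \bigcup_{j=0}^{n} \{ h \in H_n : |\mathrm{supp}(h)| = j \}$; each piece consists of functionals of one fixed finite support size and so is $w^*$-LRC by Example \ref{eg1}(\ref{eg1-2}), and a finite union of $w^*$-LRC sets is $\sigma$-$w^*$-LRC.

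With these in hand, Theorem \ref{c} supplies a sequence $(a_n)$ for which $F = \bigcup_{n} a_n (H_n \setminus H_{n-1})$ is a boundary of some equivalent norm $|||\cdot|||$. Since $F \subseteq \bigcup_n a_n H_n$ and each $a_n H_n$ is $w^*$-compact (scalar multiplication being $w^*$-continuous), $F$ is $\sigma$-$w^*$-compact; and since each $a_n H_n$ is $\sigma$-$w^*$-LRC while this class forms a $\sigma$-ideal stable under scalar multiples and countable unions, $F$ is $\sigma$-$w^*$-LRC. Because the $w^*$-topology and norm compactness are unaffected by passing to $|||\cdot|||$, I may apply Corollary \ref{cor1} to $(X,|||\cdot|||)$ and conclude that $X$ admits an equivalent $C^{\infty}$ smooth norm. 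The main obstacle to watch is the clean identification of $H_n$ with the small-support functionals together with the simultaneous verification that this set is both $w^*$-compact and $\sigma$-$w^*$-LRC; the rest is bookkeeping around Theorem \ref{c} and Corollary \ref{cor1}.
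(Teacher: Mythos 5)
Your proposal is correct and follows essentially the same route as the paper: the same sets $H_n = \{h \in B_{X^*} : |\mathrm{supp}(h)| \leq n\}$, the same identification of $b_n$ with $c_n$ via $P_{\sigma}^*$ and monotonicity, the same appeal to Example \ref{eg1}(\ref{eg1-2}) for the $w^*$-LRC property of the fixed-support-size pieces, and the same final passage through Theorem \ref{c} and Corollary \ref{cor1}. The extra details you supply (the $w^*$-closedness of $H_n$ and the bookkeeping showing $F$ is $\sigma$-$w^*$-compact and $\sigma$-$w^*$-LRC) are correct elaborations of steps the paper leaves implicit.
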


\begin{proof}
Let $H_n = \{ h \in B_{X^*} :  |\text{supp}(h)| \leq n \} $. Each $H_n$ is a relative boundary because it is $w^*$-compact. Note that given $x \in S_n$ and $\sigma \subseteq I$, with $|\sigma| =n$,
\begin{align*}
  ||P_{\sigma}(x)|| & = \sup \{ f(P_{\sigma}(x)) : f \in B_{X^*} \}\\
  & = \sup \{ P_{\sigma}^* f(x) : f \in B_{X^*} \}.
\end{align*}

Of course, $|\text{supp}(P_{\sigma}^*f)| \leq n$, for all $f \in B_{X^*}$. And by monotonicity, $||P_{\sigma}^*||=1$. So $P_{\sigma}^*(f) \in H_n$. Therefore,
\begin{align*}
  0 < c_n & = \inf \{ \sup \{ || P_{\sigma} (x) || : \sigma \subseteq I, |\sigma| = n \} : x \in S_n \}\\
 & =  \inf \{ \sup \{  P_{\sigma}^* f (x) : f \in B_{X^*}, \sigma \subseteq I, |\sigma| = n\} : x \in S_n \}\\
 & = \inf \{ \sup \{ h(x) : h \in H_n \} : x \in S_n \} = b_n. 
\end{align*}
Thus, $(b_n)$ is a strictly positive sequence converging to 1.
%
The set $H_n \backslash H_{n-1}$ is $w^*$-LRC, by Example \ref{eg1}, (\ref{eg1-2}).

By Theorem \ref{c}, there exists a sequence $(a_n)_{n=0}^{\infty}$, where the set $F = \bigcup_{n=0}^{\infty} a_n (H_n \backslash H_{n-1})$ is a $\sigma$-$w^*$-LRC and $\sigma$-$w^*$-compact boundary for an equivalent norm $|||\cdot|||$. By Corollary \ref{cor1}, $X$ will admit an equivalent $C^{\infty}$-smooth that $\varepsilon$-approximates $|||\cdot|||$.
\end{proof}

\begin{cor}\label{cor14}
Let $X$ be a Banach space with a monotone unconditional basis $(e_i)_{i \in I}$ and suppose for each $x \in S_X$ there exists $  \sigma \subset I, |\sigma| < \infty$, so that $||P_{\sigma} (x)|| = 1$. Then $X$ admits an equivalent $C^{\infty}$-smooth norm that $\varepsilon$-approximates the original norm.
\end{cor}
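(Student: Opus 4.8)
The plan is to derive Corollary \ref{cor14} directly from Corollary \ref{b} by verifying that its hypothesis implies the hypothesis of Corollary \ref{b}. So the first task is to produce a countable decomposition $S_X = \bigcup_{n=1}^\infty S_n$ for which the associated numbers $c_n$ are strictly positive and converge to $1$. The natural choice is to stratify the sphere according to how many coordinates are needed to achieve norm close to $1$. Concretely, for each $n$ I would define
$$ S_n = \left\{ x \in S_X : \sup\{ \|P_\sigma(x)\| : \sigma \subseteq I,\ |\sigma| = n \} \geq 1 - \tfrac{1}{n} \right\}. $$
By construction $c_n \geq 1 - \tfrac1n$, and since $\|P_\sigma(x)\| \le \|x\| = 1$ by monotonicity we also have $c_n \le 1$; hence $c_n$ is strictly positive (for $n \geq 2$, say, and one can discard or adjust the first term) and $c_n \to 1$. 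The remaining point is that these $S_n$ actually cover $S_X$, and this is exactly where the hypothesis of Corollary \ref{cor14} is used: given $x \in S_X$ there is a finite $\sigma$ with $\|P_\sigma(x)\| = 1$, so taking $n = |\sigma|$ (padding $\sigma$ with extra indices if necessary, which does not decrease the norm by monotonicity) places $x$ in $S_n$.

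Once the cover is in place, I would simply invoke Corollary \ref{b}: its hypotheses are precisely that $(e_i)$ is a monotone unconditional basis with $S_X = \bigcup_n S_n$ and the numbers $c_n$ strictly positive and convergent to $1$, all of which have now been verified. Corollary \ref{b} then yields the equivalent $C^\infty$ smooth norm. The $\varepsilon$-approximation claim should be tracked through: Corollary \ref{b} produces an equivalent norm $|||\cdot|||$ with a $\sigma$-$w^*$-LRC and $\sigma$-$w^*$-compact boundary, and Corollary \ref{cor1} (via Theorem \ref{main}) produces a $C^\infty$ norm $\varepsilon$-approximating $|||\cdot|||$; one should remark that by choosing the renorming parameters small the final norm can be made to $\varepsilon$-approximate the original norm, as claimed.

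I expect the main (though modest) obstacle to be the bookkeeping around indices $n$ versus the exact set-size $|\sigma|$, together with the monotonicity argument that enlarging $\sigma$ does not reduce $\|P_\sigma(x)\|$. Specifically, the hypothesis only guarantees \emph{some} finite $\sigma$ with $\|P_\sigma(x)\| = 1$; to land $x$ in a specific stratum $S_n$ one must argue that for every $n \ge |\sigma|$ one has $\sup\{\|P_\tau(x)\| : |\tau| = n\} = 1$, which follows by taking $\tau \supseteq \sigma$ and using that $P_\sigma = P_\sigma P_\tau$ together with $\|P_\sigma\| = 1$ gives $\|P_\tau(x)\| \ge \|P_\sigma(x)\| = 1$. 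This ensures the cover is genuine and the $c_n$ behave as required, after which everything reduces to the already-established Corollary \ref{b}.
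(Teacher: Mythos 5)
Your stratification argument is fine as far as it goes: the sets $S_n$ cover $S_X$ by the padding argument ($1 = \|P_\sigma(x)\| = \|P_\sigma P_\tau(x)\| \leq \|P_\sigma\|\,\|P_\tau(x)\|$ with $\|P_\sigma\| \leq 1$ by monotonicity), the $c_n$ are eventually bounded below by $1 - \tfrac1n$ and tend to $1$, and Corollary \ref{b} then yields \emph{an} equivalent $C^\infty$ smooth norm. (The $n=1$ term does need the adjustment you mention, since $1-\tfrac11 = 0$ makes $S_1 = S_X$ and $c_1$ possibly zero; in fact under the hypothesis you could just as well take $S_n = \{x : \exists\,\sigma,\ |\sigma|\leq n,\ \|P_\sigma(x)\|=1\}$, which gives $c_n = 1$ outright.)

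The genuine gap is the $\varepsilon$-approximation of the \emph{original} norm, which is part of the statement. Corollary \ref{b} does not assert any approximation, and for good reason: it routes through Theorem \ref{c}, which only produces a boundary $F = \bigcup_n a_n(H_n\setminus H_{n-1})$ of \emph{some} equivalent norm $|||\cdot|||$ for a ``suitable'' sequence $(a_n)$ that is not at your disposal, with no stated control on the Banach--Mazur distance between $|||\cdot|||$ and $\|\cdot\|$. Your closing remark that ``by choosing the renorming parameters small'' one recovers approximation of the original norm is exactly the step that is not justified by the results you are quoting. The paper sidesteps Theorem \ref{c} entirely: from $\|P_\sigma(x)\|=1$ one picks $f\in B_{X^*}$ with $1 = f(P_\sigma(x)) = P_\sigma^* f(x)$, and monotonicity gives $\|P_\sigma^*\|=1$, so $P_\sigma^* f \in H_{|\sigma|}$ where $H_n = \{h \in B_{X^*} : |\mathrm{supp}(h)|\leq n\}$. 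Hence $H = \bigcup_n H_n$ is already a $\sigma$-$w^*$-compact, $\sigma$-$w^*$-LRC boundary of the \emph{original} norm, and Corollary \ref{cor1} (i.e.\ Theorem \ref{main} with $Y=\mathbb{R}$) then delivers a $C^\infty$ norm $\varepsilon$-approximating the original. To repair your proof, replace the appeal to Corollary \ref{b} by this direct verification that $H$ is a boundary for $\|\cdot\|$ itself.
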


\begin{proof}
Let $H_n = \{ h \in B_{X^*} :  |\text{supp}(h)| \leq n \} $. As mentioned in the proof of Corollary \ref{b}, each $H_n$ is $w^*$-compact and the finite union of $w^*$-LRC sets. Now take $x \in S_X$ and $\sigma$ such that $||P_{\sigma}(x)|| = 1$. Then there is $f \in B_{X^*}$ such that $$1 = ||P_{\sigma}(x)|| = f(P_{\sigma}(x)) = P_{\sigma}^* f(x).$$ Because $(e_i)_{i \in I}$ is monotone, $||P_{\sigma}^*|| = 1$ and so $P_{\sigma}^* f \in H_{|\sigma|}$. Therefore, the set $H = \bigcup_{n=0}^{\infty} H_n$ is a boundary satisfying the hypothesis of Corollary \ref{cor1}.
\end{proof}

Using Corollary \ref{b} we can obtain new examples of spaces with equivalent $C^{\infty}$ smooth renormings.

\begin{example}
Let $\mathbb{N} = \bigcup_{n=0}^{\infty} A_n$, where each $A_n$ is finite, and let $p = (p_n)$ be an unbounded increasing sequence of real numbers with $p_n \geq 1$.
For each sequence of real numbers $x = (x_n)$ define
$$\Phi(x) = \sup \left\{ \sum_{n=0}^{\infty} \sum_{k \in B_n} |x(k)|^{p_n} : B_n \subset A_n \text{ and } B_n \text{ are pairwise disjoint.} \right\}$$
\end{example}

\begin{proof}
We define $\ell_{A,p}$ as the space of sequences $x$ where $\Phi(x/\lambda) < \infty$ for some $\lambda > 0$, with norm $||x|| = \inf \{ \lambda > 0 : \Phi(x/\lambda) \leq 1 \}.$
Define the subspace $h_{A,p}$ as the norm closure of the linear space generated by the basis $e_n(k) = \delta_{n,k}$. \cite[Example 16]{pieces} provides an appropriate sequence of subsets $(S_n)$ of $S_X$ so that Corollary \ref{b} holds.
\end{proof}

\begin{example}
Let $M$ be an Orlicz function with
$$ M(t) > 0 \text{ for all  } t > 0, \text{ and } \lim_{t \rightarrow 0} \frac{M(K(t)}{M(t)} = + \infty,$$
for some constant $K > 0$. Let $h_M(\Gamma)$ be the space of all real functions $x$ defined on $\Gamma$ with $\sum_{\gamma \in \Gamma} M (x_{\gamma} / \rho ) < \infty$ for all $\rho > 0$, with the norm
$$ ||x|| = \inf \left \{ \rho > 0 : \sum_{\gamma \in \Gamma} M \left( \frac{x_{\gamma}}{\rho} \right) \leq 1 \right \}.$$
\end{example}

\begin{proof}
The canonical unit vector basis $(e_{\gamma})_{\gamma \in \Gamma}$ of functions $e_{\gamma} (\beta) = \delta_{\gamma, \beta}$ is unconditionally monotone. \cite[Example 18]{pieces} provides suitable subsets of $S_X$ to ensure the hypothesis of Corollary \ref{b} holds.
\end{proof}

The final example concerns the predual of a Lorentz sequence space $d(w, 1, A)$, for an arbitrary set $A$.

Let $w = (w_n) \in c_0 \backslash \ell_1$ with each $w_n$ strictly positive and $w_0 = 1$. We define $d(w, 1, A)$ as the space of $x: A \longrightarrow \mathbb{R}$ for which
$$||x|| = \sup \left \{ \sum_{j=0}^{\infty} w_j |x(a_j)| :  (a_j)  \subseteq A  \text{ is a sequence of distinct points }\right \} < \infty. $$ The canonical predual $d_*(w, 1, A)$ of $d(w, 1, A)$ is given by the space of $y: A \longrightarrow \mathbb{R}$ for which $\overline{y} = (\overline{y}_k) \in c_0$, where
$$\overline{y}_k = \sup \left \{ \frac{\sum_{i=0}^{k-1} |y(a_i)|}{\sum_{i=0}^{k-1} w_i} : a_0, a_1, \dots, a_{k-1} \text{ are distinct points of } A \right \}, $$
with norm $||y|| = ||\overline{y}||_{\infty}$. We can see that $(e_a)_{a \in A}$ is a monotone unconditional basis for both $d(w, 1, A)$ and $d_*(w, 1, A)$. The separable version of $d_*(w, 1, A)$ was first introduced in \cite{s}.

\begin{example} \label{lor}
$X = d_*(w,1,A)$ has a $C^{\infty}$ smooth equivalent renorming that $\varepsilon$-approximates the original norm.
\end{example}

\begin{proof}
Let $y \in S_X$. Since $\overline{y} \in c_0$, there exists $k \in \mathbb{N}$ such that $\overline{y}^k = 1.$ It can also be shown $y \in c_0(A)$ and thus the supremum in the definition of $\overline{y}^k$ is attained. Following this, there exists $a_0, a_1, \dots , a_{k-1} \in A$ such that $$1 = \overline{y}^k = \frac{\sum_{i=0}^{k-1} |y(a_i)|}{\sum_{i=0}^{k-1} w_i}.$$ Setting $\sigma = \{a_0, a_1, ... , a_{k-1}\}$, we have $||P_{\sigma}(y)|| =1$. By Corollary \ref{cor14}, $X$ has a $C^{\infty}$ smooth equivalent renorming that $\varepsilon$-approximates the original norm.

\end{proof}

\begin{remark}
The space $X = d_*(w,1,A)$ for $A$ uncountable is a new example of a space with a $C^{\infty}$ smooth renorming. It is not yet known if $X$ has an analytic renorming.
\end{remark}

\begin{remark} In Theorem \ref{main} and Corollary \ref{cor1} we cannot drop the $\sigma$-$w^*$-compactness condition in general, and expect an equivalent norm of any order of smoothness that depends locally on finitely many coordinates. In \cite{fhz}, $C_0(\omega_1)$ is shown to have no such norm. On the other hand, $C_0(\omega_1)$ admits an equivalent norm supporting a boundary that is $w^*$-discrete (this follows from \cite[Theorem 10]{fpst}).
\end{remark}

\section{Acknowledgements} The author would like to thank R. J. Smith for discussion and suggestions throughout the writing of this paper and S. Troyanski for further remarks, in particular bringing his attention to Example \ref{lor}.

\end{document}